\newcommand{\Int}{\operatorname{int}}
\newcommand{\bC}{\mathbb{C}}
\newcommand{\bD}{\mathbb{D}}
\newcommand{\bH}{\mathbb{H}}
\newcommand{\bN}{\mathbb{N}}
\newcommand{\bR}{\mathbb{R}}
\newcommand{\bZ}{\mathbb{Z}}
\theoremstyle{plain}
\newtheorem{lemma}{Lemma}
\newtheorem{claim}{Claim}
\newtheorem*{mainth}{Theorem}
\newtheorem*{iversenth}{Iversen's Theorem}
\theoremstyle{definition}
\newtheorem*{acknowledgement}{Acknowledgement}
\theoremstyle{remark}
\begin{document} 
 
\title[A generalization of a completeness lemma]{A generalization of a completeness lemma in minimal surface theory}
\author[Y\^usuke Okuyama]{Y\^usuke Okuyama}
\address{
Division of Mathematics,
Kyoto Institute of Technology,
Sakyo-ku, Kyoto 606-8585 Japan.}
\email{okuyama@kit.ac.jp}

\author[Katsutoshi Yamanoi]{Katsutoshi Yamanoi}
\address{
Department of Mathematics,
Tokyo Institute of Technology,
Oh-okayama, Meguro-ku,
Tokyo 152-8551
Japan.}
\email{yamanoi@math.titech.ac.jp}

\date{\today}

\subjclass{Primary 30D20; Secondary 53A10}

\keywords{completeness lemma, minimal surface theory, asymptotic curve, harmonic measure}

\begin{abstract}
 We settle a question posed by Umehara and Yamada, which generalizes
 a completeness lemma useful in differential geometry.
\end{abstract}

\maketitle

The following answers affirmatively a question posed by Umehara and Yamada
\cite[Question C]{UY}.

\begin{mainth}\label{th:spiral}
 Let $f$ be a holomorphic function on $\{|\zeta|>1\}\subset\bC$ such that 
 $f(\{|\zeta|>1\})\subset\bC\setminus\{0\}$ and let 
 $n$ be a non-negative integer.
If every real-analytic curve $\gamma:[0,1)\to\{|\zeta|>1\}$ tending to $\infty$ satisfies
 \begin{gather}
  \int_\gamma|\log\zeta|^n|f(\zeta)|| d\zeta|=\infty,\label{eq:complete}
 \end{gather}
 then $f$ is meromorphic at $\infty$. 
\end{mainth}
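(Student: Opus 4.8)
The plan is to argue by contraposition after inverting the singularity. Set $u=1/\zeta$ and $h(u):=f(1/u)/u^{2}$, so that $h$ is holomorphic and zero-free on the punctured disk $\{0<|u|<1\}$, it is meromorphic at $0$ exactly when $f$ is meromorphic at $\infty$, and the substitution $\zeta=1/u$ turns $\int_{\gamma}|\log\zeta|^{n}|f(\zeta)|\,|d\zeta|$ into $\int_{\gamma'}|\log u|^{n}|h(u)|\,|du|$ (taking $\log\zeta=-\log u$; only the tail of the integral matters, so the branch is irrelevant) along the image curve $\gamma'$, which tends to $0$ and is again real-analytic. So I would reduce to the following: if $h$ is holomorphic and zero-free on $\{0<|u|<1\}$ with an \emph{essential} singularity at $0$ --- the only alternative to being meromorphic there --- then there is a real-analytic curve $\gamma$ with $\gamma(t)\to 0$ and $\int_{\gamma}|\log u|^{n}|h(u)|\,|du|<\infty$.

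Next I would invoke Iversen's Theorem. Since $h$ omits the value $0$ and $0$ is an essential singularity, $0$ is the Picard-exceptional value of $h$ at $0$, hence an asymptotic value: for each small $\epsilon>0$ the open set $\{0<|u|<\tfrac12:|h(u)|<\epsilon\}$ has a component $U_{\epsilon}$ with $0\in\overline{U_{\epsilon}}$ along which $h\to 0$. On $U_{\epsilon}$ the function $v:=\log(\epsilon/|h|)$ is positive and harmonic, it vanishes on the part of $\partial U_{\epsilon}$ interior to $\{0<|u|<\tfrac12\}$, and $v\to+\infty$ toward $0$ inside $U_{\epsilon}$; its local harmonic conjugate $v^{*}=-\arg h$ makes $F:=v+iv^{*}=\log(\epsilon/h)$ locally holomorphic. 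I would also record the crude bound $\int_{\gamma}|\log u|^{n}|h|\,|du|\le(\sup_{\gamma}|h|)\int_{\gamma}|\log u|^{n}\,|du|$: once $\gamma\subset U_{\epsilon}$ the first factor is $\le\epsilon$, and since $|\log u|^{n}$ is integrable at the origin, the integral is finite as soon as $\gamma$ is rectifiable near $0$ with bounded $\arg u$ --- and, more generally, provided any winding of $\gamma$ around $0$ is offset by sufficiently fast decay of $|u|$ (equivalently of $|h|$).

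The crux is to extract from $U_{\epsilon}$ an asymptotic curve with exactly this control, and I expect to do it via harmonic measure. I would take $\gamma$ along a level set $\{v^{*}=c\}$ --- a steepest-descent line of $|h|$, automatically real-analytic away from the isolated critical points of $h$, which are avoided by choosing $c$ outside a countable set --- and estimate $\int_{\gamma}|\log u|^{n}|h|\,|du|$ by averaging over $c$, working on the universal cover of $U_{\epsilon}$ (equivalently in the variable $z=-\log u$, where $|\log u|^{n}=|z|^{n}$ is single-valued). Since the map $F$ has Jacobian $|F'|^{2}=|h'/h|^{2}$, the average over $c$ of $\int_{\gamma}|\log u|^{n}|h|\,|du|$ collapses to the area integral $\int_{U_{\epsilon}}|\log u|^{n}|h'|$. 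For $n=0$ this is a length--area (Cauchy--Schwarz) estimate, controlled by $\operatorname{area}(U_{\epsilon})$ and $\operatorname{area}(h(U_{\epsilon}))$ together with the covering structure of $h$ over the punctured $\epsilon$-disk --- the mechanism behind the classical $n=0$ lemma.

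The hard part will be the weight $|\log u|^{n}$ for $n\ge 1$: it forces a bound on how much $U_{\epsilon}$ can wind around $0$. Here I would apply the Ahlfors distortion theorem (Carleman's estimate) to $v$ on $U_{\epsilon}$: heavy winding makes the tract correspondingly thin, which forces $v$ to grow --- hence $|h|=\epsilon e^{-v}$ to decay --- fast enough that $|\log u|^{n}$, which is bounded by a power of the winding, gets absorbed; whereas a tract that does not wind much yields a bounded-$\arg u$ asymptotic path and one finishes with the crude bound. Making this decay-versus-length/weight trade-off quantitative and uniform in $\epsilon$, and finally promoting the curve to a genuinely real-analytic one (via the level-curve choice above, or a small perturbation preserving finiteness of the integral), completes the argument; that trade-off is where I expect essentially all the difficulty to lie.
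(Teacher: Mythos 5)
Your overall strategy (contrapositive, pass to the tract $U_{\epsilon}$ of the omitted value $0$, select a level curve of $\arg h$ by averaging, and handle the weight by an Ahlfors-distortion trade-off) is genuinely different from the paper's, which never works at the singularity directly: the paper first reduces to $f$ entire and zero-free on $\bC\setminus\{0\}$, forms $F(z)=\int_a^{a+z}z^nf(e^z)e^z\,dz$ so that hypothesis \eqref{eq:complete} becomes ``every real-analytic curve to $\infty$ in the right half-plane has $F$-image of infinite length,'' and then combines Iversen's theorem with a topological analysis of $F^{-1}(\bD_R)\cap\bH_+$, a Harnack/order-of-growth argument against Hadamard's theorem, and finally Lindel\"of's theorem plus the two-constant theorem. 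The difficulty with your route is that its central step is not carried out, and it is precisely the step that constitutes the theorem. You defer the quantitative winding-versus-decay estimate (``that trade-off is where I expect essentially all the difficulty to lie''), but the entire content of the generalization from $n=0$ to $n\ge 1$ is to control the factor $|\arg u|^n$ coming from a spiralling tract; asserting that Ahlfors' distortion theorem will force $|h|$ to decay fast enough to absorb a polynomial in the winding is a plausible heuristic, not an argument, and you give no mechanism for making it uniform or for handling a tract whose width and winding are tuned against each other.

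There are also two concrete errors in the parts you do write out. First, the ``crude bound'' is false as stated: finite length of $\gamma$ together with bounded $\arg u$ does not imply $\int_\gamma|\log u|^n|h|\,|du|<\infty$ for $n\ge1$, since a rectifiable curve tending to $0$ may satisfy $|u(s)|=e^{-1/(L-s)^2}$ at arc length $s$, making $\int_\gamma(\log\frac1{|u|})^n\,ds$ diverge; you need quantitative control of $\log\frac1{|u|}$ along the path, not just rectifiability, and the factor $|h|\le\epsilon$ does not help because $\epsilon$ is fixed. Second, the length--area step only tells you that the average of $\int_\gamma|\log u|^n|h|\,|du|$ over level curves equals $\int_{U_\epsilon}|\log u|^n|h'|\,dA$, and you never show this area integral is finite; by Cauchy--Schwarz its natural majorant involves $\int_{U_\epsilon}|h'|^2\,dA$, which is the area of the image counted with multiplicity and is infinite because $h$ covers the punctured $\epsilon$-disk infinitely often over the tract. (The classical $n=0$ argument avoids this by working in logarithmic coordinates on the image, and even there more care is needed.) So while the skeleton is a recognizable and legitimate alternative line of attack, as written the proof has a genuine gap at its core and two supporting steps that fail as stated.
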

In the special case of $n=0$, this Theorem
reduces to the completeness lemma
due to MacLane and Voss (cf.\ Osserman \cite[p.\ 89]{O}), 
which plays an important role in minimal surface theory.
A new insight by Umehara and Yamada
is the possibility to take into account the variation of the argument of the curve $\gamma$, 
namely the imaginary part of $\log \gamma$, motivated by their investigation of 
parabolic ends of constant mean curvature one surfaces in de Sitter 3-space.
A notable consequence of Theorem is an affirmative answer to \cite[Question 2]{UYc}.
This implication is due to Umehara and Yamada \cite{UYc}.
For more details and backgrounds, we refer \cite{UY} and \cite{UYc}.
Our proof is based on the theory of entire functions, 
i.e., holomorphic functions on $\bC$, 
and of harmonic measures, while the problem has its origin in differential geometry. 

\par

After having written this article, we learned that the Theorem, except for the real-analyticity of the path $\gamma$, could be shown using Huber's result \cite{Huber57}.
Our proof of the Theorem is an improvement of the argument in Osserman's book \cite{O}.

\begin{proof}
 Since the integral in (\ref{eq:complete}) is non-decreasing as 
 $n\ge 0$ increases,
 the Theorem for $n=0$ is a consequence of that for $n>0$. We assume that 
 $n$ is a positive integer.

 We reduce the problem to the case where $f$ is defined over all of $\bC$
 and satisfies $f(\bC\setminus\{0\})\subset\bC\setminus\{0\}$
 (cf.\ Osserman \cite[p.\ 89]{O}).
 Let us consider possibly multivalued holomorphic functions $\log f(\zeta)$ and
 $\log\zeta$ on $\{|\zeta|>1\}$, and choose $k\in\bZ$ such that
 $\log f(\zeta)-k\log\zeta$ is single-valued and holomorphic on 
 $\{|\zeta|>1\}$.
 Then its Laurent expansion is written as
 \begin{gather*}
  \log f(\zeta)-k\log\zeta=\sum_{j=-\infty}^{\infty}c_j\zeta^j=
  H(\zeta)+h(\zeta),
 \end{gather*}
 where $H(\zeta)=\sum_{j=0}^\infty c_j\zeta^j$ is an entire function
 and $h(\zeta)=\sum_{j=-\infty}^{-1}c_j\zeta^j$ is a holomorphic function 
 on $\{|\zeta|>1\}\cup\{\infty\}\subset\hat{\bC}$ such that $h(\infty)=0$. Hence 
 \begin{gather*}
  f(\zeta)=e^{h(\zeta)}\zeta^ke^{H(\zeta)}.
 \end{gather*}

Set $g(\zeta)=\zeta ^ke^{H(\zeta)}$ if $k\geq 0$, and $g(\zeta )=e^{H(\zeta)}$ if $k<0$.
Then $g$ is entire and $f(\zeta )/g(\zeta )$ is holomorphic near $\infty$.
The condition (\ref{eq:complete}) implies
 \begin{gather*}
  \int_\gamma|\log\zeta|^n|g(\zeta )| | d\zeta|=\infty,
\end{gather*}
and if $g$ is meromorphic at $\infty$, then so is $f$.
Hence replacing $f$ with $g$ if necessary,
we may assume that $f$ is an entire 
function on $\bC$ and that $f(\bC\setminus\{0\})\subset\bC\setminus\{0\}$.

Consider an indefinite integral
\begin{gather*}
 G(\zeta )=\int_0^\zeta f(\zeta )d\zeta
\end{gather*}
of $f$. Since the zeros of $G$ are isolated, there is $a>0$ such that
\begin{gather*}
 \min_{t\in\bR}|G(e^{a+it})|>0.
\end{gather*}
We fix $a>0$ with this property throughout.
Set
\begin{gather*}
 F(z):=\int_a^{a+z}z^nf(e^z)e^z dz.
\end{gather*}
Then we note from our earlier discussion that $-a$ is the only critical point of $F$, that is,
\begin{gather*}
 \{z\in\bC;F'(z)=0\}=\{-a\}.
\end{gather*}
Let $\zeta=\pi(z):=e^{z+a}$ be a covering map from $\bC$ to $\bC\setminus\{0\}$.
For any real-analytic curve $\Gamma$ in $z$-plane, we have
\begin{gather}\label{eqn:length}
\int_{\pi\circ\Gamma}|\log\zeta|^n|f(\zeta)|| d\zeta|
=(\text{the Euclidean length of }F\circ\Gamma).
\end{gather}

\begin{lemma}\label{th:imaginary}
$\lim_{t\in\bR,|t|\to\infty}|F(it)|=\infty$.
\end{lemma}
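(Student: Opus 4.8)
The plan is to compare $F$ on the imaginary axis with the function $G$ on the circle $|\zeta|=e^{a}$, on which, by the choice of $a$, we have $|G|\ge m:=\min_{t\in\bR}|G(e^{a+it})|>0$. For $t>0$ I would parametrize the segment from $a$ to $a+it$ by $w=a+is$, $s\in[0,t]$, and put $\phi(s):=G(e^{a+is})$. Since $G'=f$, this gives
\[
 F(it)=\int_0^{t}(a+is)^{n}\,\phi'(s)\,ds .
\]
The function $\phi$ is smooth, $2\pi$-periodic and nowhere zero; the feature to exploit is that it has \emph{vanishing mean}, because $\frac{1}{2\pi}\int_0^{2\pi}\phi(s)\,ds=\frac{1}{2\pi}\int_0^{2\pi}G(e^{a+is})\,ds=G(0)=0$ by the mean value property (equivalently, $G(\zeta)=\sum_{m\ge1}b_{m}\zeta^{m}$, so $\phi$ carries only positive Fourier modes). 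Hence its antiderivative $\Psi(s):=\int_0^{s}\phi(u)\,du$ is again $2\pi$-periodic, and in particular bounded on $\bR$.

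Next I would integrate by parts. A first integration by parts yields
\[
 F(it)=(a+it)^{n}\phi(t)-a^{n}\phi(0)-n\,i\int_0^{t}(a+is)^{n-1}\phi(s)\,ds .
\]
The leading term $(a+it)^{n}\phi(t)$ has modulus at least $m\,|a+it|^{n}$, which is exactly the growth we need. The hard part — and the only genuine obstacle — is that the remaining integral is a priori also of order $|t|^{n}$, so a crude estimate is useless and one must bring the cancellation to light rather than bound the integrand.

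This is where the vanishing mean of $\phi$ does the work: writing $\phi=\Psi'$ and integrating by parts a second time converts $\int_0^{t}(a+is)^{n-1}\phi(s)\,ds$ into $[(a+is)^{n-1}\Psi(s)]_0^{t}$, which is $O(|t|^{n-1})$ since $\Psi$ is bounded, plus $-(n-1)i\int_0^{t}(a+is)^{n-2}\Psi(s)\,ds$, which boundedness of $\Psi$ makes $O\!\bigl(\int_0^{t}|a+is|^{n-2}\,ds\bigr)=O(|t|^{n-1})$ (this last term is simply absent when $n=1$). Therefore
\[
 F(it)=(a+it)^{n}\phi(t)+O\bigl(|t|^{n-1}\bigr)\qquad(t\to+\infty),
\]
so that $|F(it)|\ge m\,|a+it|^{n}-O(|t|^{n-1})\to\infty$. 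Running the same computation with $s$ decreasing from $0$ to $t$ handles $t\to-\infty$ identically, which completes the proof.
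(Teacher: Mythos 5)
Your proof is correct and is essentially the paper's argument: both hinge on integration by parts along the circle $|\zeta|=e^{a}$, isolating the boundary term $(a+it)^{n}G(e^{a+it})$, whose modulus is at least $m\,|t|^{n}$, and controlling the remainder as $O(|t|^{n-1})$. Your bounded periodic antiderivative $\Psi$ is (up to a constant and a factor of $i$) the paper's entire function $G_{2}$ restricted to that circle, so stopping after two integrations by parts instead of $n+1$ is only a cosmetic streamlining.
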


\begin{proof}
 Let us define $n+2$ auxiliary entire functions $G_0,G_1,\cdots,G_{n+1}$ 
inductively; put 
\begin{gather*}
 G_0(\zeta):=f(\zeta)\cdot\zeta
\end{gather*}
 and for each $j\in\{0,1,\ldots,n\}$,
\begin{gather*}
 G_{j+1}(\zeta):=\int_0^\zeta \frac{G_j(\zeta)}{\zeta} d\zeta.
\end{gather*}
Then $G_1(\zeta )=G(\zeta )$, and for every $j\in\{0,1,\ldots,n\}$,
\begin{gather*}
 \frac{ d G_{j+1}(e^{a+it})}{ d t}=iG_j(e^{a+it}).
\end{gather*}
Hence for $j\in\{0,1,\ldots,n-1\}$,
\begin{align*}
 &i\int_0^t (a+it)^{n-j}G_j(e^{a+it}) d t
= \int_0^t(a+it)^{n-j}\cdot\frac{ d G_{j+1}(e^{a+it})}{ d t}d t\\
=&\left[ (a+it)^{n-j}G_{j+1}(e^{a+it})\right] _0^t-(n-j)i\int_0^t 
(a+it)^{n-j-1}G_{j+1}(e^{a+it})d t\\ 
=& (a+it)^{n-j}G_{j+1}(e^{a+it})-a^{n-j}G_{j+1}(e^a)
-(n-j)i\int_0^t (a+it)^{n-(j+1)}G_{j+1}(e^{a+it}) d t.
\end{align*}
Similarly,
\begin{gather*}
 i\int_0^t G_n(e^{a+it}) d t=G_{n+1}(e^{a+it})-G_{n+1}(e^a).
\end{gather*}
Hence there are constants $C_j\in\bC$ $(j=2,\ldots,n+1)$ and $C\in\bC$
such that for every $t\in\bR$,
\begin{align*}
 F(it)&
=\int_a^{a+it}z^nG_0(e^z) d z
=\int_0^t(a+it)^nG_0(e^{a+it})i d t\\
&=G_1(e^{a+it})(a+it)^n
+\sum_{j=1}^nC_{j+1}G_{j+1}(e^{a+it})(a+it)^{n-j}+C.
\end{align*}
Now the assumption $\min_{t\in\bR}|G_1(e^{a+it})|>0$ 
together with $\max_{t\in\bR}|G_j(e^{a+it})|<\infty$ $(j=2,\ldots,n+1)$
completes the proof.
\end{proof}

Next we consider asymptotic curves of $F$.
A curve ${\Gamma}:[0,1)\to\bC$ is called an asymptotic curve of an entire function $g$
with a finite asymptotic value $b\in\bC$ if 
${\Gamma}$ tends to $\infty$ and $\lim_{t\to 1}g\circ{\Gamma}(t)=b$.
We recall the following well-known

\begin{iversenth}[cf.\ \cite{N}]
 Let $g$ be a non-constant entire function. 
 Suppose that $z_0\in\bC$ is not a critical point of $g$, and put $w_0:=g(z_0)$. 
 Let $\phi$ be a single-valued analytic branch of $g^{-1}$ at $w_0$
 such that $\phi(w_0)=z_0$, and $\gamma:[0,1]\to \mathbb C$ be a curve with $\gamma (0)=w_0$.
 If the analytic continuation of $\phi$ along $\gamma|[0,t]$ is possible for any $t\in [0,1)$, 
 but impossible for $t=1$, then either 
\begin{itemize}
 \item $\lim_{t\to 1}\phi\circ\gamma(t)\in\bC$ exists and is a critical point of $g$, or 
 \item $\phi\circ\gamma$ 
 tends to $\infty$. In this case, $\phi\circ\gamma$
 is an asymptotic curve of $g$ with the finite asymptotic value $\gamma(1)$.
\end{itemize}
\end{iversenth}

For completeness, we include a proof.

\begin{proof}
We claim that the cluster set
$C:=\bigcap_{t\in[0,1)}\overline{\phi\circ\gamma([t,1))}$,
where the closure is taken in $\hat{\bC}$, is non-empty and
connected: indeed, from the compactness of $\hat{\bC}$, $C\neq\emptyset$.
If $C$ is not connected,
then there are distinct open subsets $U_1$ and $U_2$ in $\hat{\bC}$ 
intersecting $C$ such that $(C\cap U_1)\cup(C\cap U_2)=C$. 
There are $(t_j^1)$ and $(t_j^2)$ in $[0,1)$ 
tending to $1$ such that $\lim_{j\to 1}\phi(\gamma(t_j^i))$ exists in $C\cap U_i$
for each $i\in\{1,2\}$ and that for every $j\in\bN$, $t_j^1<t_j^2<t_{j+1}^1$.
For every $j\in\bN$, since $\phi\circ\gamma$ is continuous,
$\phi\circ\gamma([t_j^1,t_j^2])$ is connected. Hence there is $t_j\in[t_j^1,t_j^2]$ such that 
$\phi(\gamma(t_j))\in\hat{\bC}\setminus(U_1\cup U_2)$.
From the compactness of $\hat{\bC}$, there is a subsequence $(s_j)$
of $(t_j)$ tending to $1$ such that $\lim_{j\to 1}\phi(\gamma(s_j))$ exists 
in $C\setminus(U_1\cup U_2)$. This is a contradiction.
Thus $C$ is connected.

Unless $C$ is a singleton, $C$ is a continuum.
From 
$g(\phi\circ\gamma (t))=\gamma (t)$
for every $t\in [0,1)$ and the continuity of $g$,
$g(C)=\{\gamma(1)\}$. 
Then by the identity theorem, 
$g$ must be constant. This is a contradiction. 
Hence $C$ is a singleton, so $z_1:=\lim_{t\to 1}\phi(\gamma(t))\in\hat{\bC}$ exists. 
If $z_1\in\bC$, then $z_1$ is a critical point of $g$ 
since $\phi$ cannot be continued analytically along all over the $\gamma$.
If $z_1=\infty$, then $\phi\circ\gamma$ tends to $\infty$.
\end{proof}

For each $w_0\in \mathbb C$ and each $r>0$, put $\bD_r(w_0):=\{w\in\bC;|w-w_0|<r\}$.
Put $\bH_+:=\{z\in\bC;\Re z>0\}$, $I:=\{z\in\bC;\Re z=0\}$
and $\bH_-:=\{z\in\bC;\Re z<0\}$.

\begin{lemma}\label{lem:cont}
Let $\Gamma :[0,1) \to \mathbb C$ be an asymptotic curve of $F$ 
with a finite asymptotic value.
Then for every $t\in[0,1)$ close enough to $1$, we have $\Gamma(t)\in\bH_-$.
\end{lemma}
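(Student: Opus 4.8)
The plan is to combine the estimate behind Lemma~\ref{th:imaginary}, the identity (\ref{eqn:length}), Iversen's Theorem and---only at the very end---the hypothesis (\ref{eq:complete}). First I would note that the integration by parts performed in the proof of Lemma~\ref{th:imaginary} is valid along an arbitrary path, so it yields the global representation
\begin{gather*}
 F(z)=G_1(e^{a+z})(a+z)^n+\sum_{j=1}^{n}C_{j+1}G_{j+1}(e^{a+z})(a+z)^{n-j}+C\qquad(z\in\bC),
\end{gather*}
with $G_1,\dots,G_{n+1}$ and $C_2,\dots,C_{n+1},C$ as in that proof (here $G_1=G$). Since $G_2,\dots,G_{n+1}$ are bounded on compact sets, this gives $|F(z)|\to\infty$ as $z\to\infty$ subject to the constraint that $e^{a+z}$ remains in a fixed compact subset of $\bC\setminus G^{-1}(0)$---a sharpening of Lemma~\ref{th:imaginary} that I will use repeatedly.

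Write $b:=\lim_{t\to1}F(\Gamma(t))$. Since $\Gamma$ tends to $\infty$ while $|F|\to\infty$ on $I$, the curve $\Gamma$ meets $I$ only inside a bounded set, hence eventually avoids $I$; its tail being connected, $\Gamma$ lies eventually in $\bH_+$ or eventually in $\bH_-$, and I must exclude the former. Assume $\Gamma$ eventually lies in $\bH_+$ and consider the cluster set $C_\infty:=\bigcap_{t\in[0,1)}\overline{(\pi\circ\Gamma)([t,1))}\subset\hat{\bC}$, which (as in the proof of Iversen's Theorem above) is a nonempty connected compact set all of whose finite points have modulus $\ge e^a$. If $\zeta_0\in C_\infty\cap\bC$, choosing $t_k\to1$ with $\pi(\Gamma(t_k))\to\zeta_0$ makes $\Re\Gamma(t_k)$ bounded and $|\Im\Gamma(t_k)|\to\infty$, hence $|a+\Gamma(t_k)|\to\infty$; the sharpening of Lemma~\ref{th:imaginary} then forces $G(\zeta_0)=0$, for otherwise $\pi(\Gamma(t_k))$ would lie in a fixed compact set off $G^{-1}(0)$ along which $|F|\to\infty$, contradicting $F(\Gamma(t_k))\to b$. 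Thus $C_\infty\cap\bC$ is contained in the discrete set $G^{-1}(0)$, and since $C_\infty$ is connected, either $C_\infty=\{\zeta_*\}$ for one zero $\zeta_*$ of $G$, or $C_\infty=\{\infty\}$. The first is impossible: then $\pi(\Gamma(t))\to\zeta_*\ne0$ although $\Im\Gamma\to\pm\infty$, so $\Im\Gamma$ would have to run through $\arg\zeta_*+\pi$ modulo $2\pi$ infinitely often, at which times $\pi(\Gamma(t))$ stays a definite distance from $\zeta_*$. Hence $C_\infty=\{\infty\}$, i.e.\ $\Re\Gamma(t)\to+\infty$.

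The final---and decisive---step is to rule out $\Re\Gamma\to+\infty$ using (\ref{eq:complete}). The tail of $\Gamma$ lies in an asymptotic tract $V$ of $F$ over $b$ on which $F$ has no critical point, since the only one, $-a$, lies in $\bH_-$; so $F|_V$ is an unbranched covering of a punctured disc about $b$. I would use Iversen's Theorem to verify that $\pi(V)$ really reaches $\zeta=\infty$, and to build, inside $V$ and approaching $\infty$ along the same end as $\Gamma$, a real-analytic curve $\Gamma'$ tending to $\infty$ with $F\circ\Gamma'\to b$ along a path of \emph{finite} Euclidean length (a radial curve in the uniformization of $V$, along which $F=b+\varepsilon e^{-s}$, does this). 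The cluster-set argument above, applied to $\Gamma'$, gives $\Re\Gamma'\to+\infty$, so $\pi\circ\Gamma'$ is a real-analytic curve in $\{|\zeta|>1\}$ tending to $\infty$; by (\ref{eqn:length}) the integral in (\ref{eq:complete}) for $\pi\circ\Gamma'$ equals the finite length of $F\circ\Gamma'$, contradicting (\ref{eq:complete}).

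Everything before the last step is soft. I expect the genuine difficulty to be exactly the construction in that step: showing that the asymptotic tract over $b$ attached to the end of $\Gamma$ stays (eventually) in $\bH_+$ and opens out to $\zeta=\infty$ rather than spiralling in an annulus, and exhibiting within it a real-analytic divergent curve of finite $F$-length, so that (\ref{eq:complete}) becomes applicable.
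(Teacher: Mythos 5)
Your overall strategy coincides with the paper's: assume the tail of $\Gamma$ lies in $\bH_+$, use Iversen's theorem to produce inside the corresponding piece of $F^{-1}(\bD_r(b))$ a real-analytic curve tending to $\infty$ whose $F$-image has finite length, and contradict \eqref{eq:complete} via \eqref{eqn:length}. The preliminary material is sound; in particular your global integration-by-parts identity for $F$ is correct (both sides are entire and agree on $i\bR$), and the resulting ``compact-off-$G^{-1}(0)$'' sharpening of Lemma~\ref{th:imaginary} is a legitimate substitute for the paper's three-case argument (Claim~\ref{th:infinite}) that real-analytic curves in $\bH_+$ tending to $\infty$ have infinite $F$-length. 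But the step you yourself flag as ``the genuine difficulty'' is exactly the step that carries the proof, and it is missing. Concretely: you need the constructed curve $\Gamma'$ to stay in $\bH_+$ --- otherwise neither your cluster-set argument nor \eqref{eq:complete} applies to it --- and nothing in your write-up prevents $\Gamma'$ from slipping across the bounded portion of $I$ into $\bH_-$. Lemma~\ref{th:imaginary} only forces the points of $F^{-1}(\bD_r(b))\cap I$ into some disc $\{|z|<R\}$; a curve in $F^{-1}(\bD_r(b))$ starting far out in $\bH_+$ could a priori return to that disc and cross $I$ there. The paper's missing ingredient is Claim~\ref{th:bounded}: after fixing $R$ from Lemma~\ref{th:imaginary}, one shrinks $r$ further so that $\bD_r(b)\cap F(\{|z|=R\})=\emptyset$; then any component of $F^{-1}(\bD_r(b))$ meeting $I$ is trapped inside $\{|z|<R\}$, hence bounded, so the (unbounded) component containing the tail of $\Gamma$ avoids $I$ entirely and lies in $\bH_+$, and every lift of a path in $\bD_r(b)$ starting on that tail automatically stays in $\bH_+$.

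Two further points. Your assertion that $F$ restricted to the tract $V$ is an unbranched covering of a punctured disc about $b$ is unjustified (a local homeomorphism onto a punctured disc need not be a covering; establishing path lifting is precisely what is at stake), but it is also unnecessary: the paper simply continues the branch $\phi$ of $F^{-1}$ with $\phi(F(\Gamma(t_0)))=\Gamma(t_0)$ along radii of a disc $\bD_{r/2}(F(\Gamma(t_0)))\subset\bD_r(b)$, and Iversen's theorem applied to a singular radius yields the desired real-analytic divergent curve of finite $F$-length. You must then also dispose of the alternative in which no radius is singular, i.e.\ $\phi$ extends to the whole disc containing $F(\Gamma([t_0,1)))$: in that case $\Gamma=\phi\circ F\circ\Gamma$ converges to $\phi(b)$, contradicting $\Gamma\to\infty$. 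Finally, your second paragraph (showing $\Re\Gamma\to+\infty$ for the original $\Gamma$) is correct but becomes superfluous once the component argument is in place.
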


\begin{proof}
We begin with

\begin{claim}\label{th:infinite}
For any real-analytic curve $C:[0,1) \to \bH_+$ tending to $\infty$, 
the length of $F\circ C$ is infinite.
\end{claim}

\begin{proof}
 If the real part $\Re C$ of $C$ tends to $\infty$, then
 the curve $\pi\circ C$ also tends to $\infty$.
 Thus by \eqref{eqn:length}, 
 assumption \eqref{eq:complete} implies that
 the length of $F\circ C$ is infinite.
 If $M:=\sup\Re C<\infty$, then 
 $\pi\circ C\subset\{e^a<|\zeta|<e^{M+a}\}$ 
 and $\lim_{t\to 1}|\arg (\pi\circ C(t))|=\infty$. Hence
 \begin{gather}\label{eqn:length2}
 \int_{\pi\circ C}|f(\zeta)|| d\zeta|=\infty .
 \end{gather}
 Since $|\log \zeta|\geq a$ on the curve $\pi\circ C$, 
 (\ref{eqn:length2}) with 
 equality (\ref{eqn:length}) 
 implies that the length of $F\circ C$ is infinite.

 In the remaining case, 
 $C$ should transverse some vertical strip $\{b_1\le\Re z\le b_2\}$
 infinitely often.
 Then $\pi \circ C$ transverses a round annulus $\{e^{b_1+a}\le|\zeta|\le e^{b_2+a}\}$ 
 infinitely many times.
 Hence again we get \eqref{eqn:length2}, and the same argument as the above 
 implies that the length of $F\circ C$ is infinite.
\end{proof}

Let $w_1\in \mathbb C$ be the finite asymptotic value of $F$
along $\Gamma :[0,1) \to \mathbb C$, that is, $\lim_{t\to 1}F\circ\Gamma(t)=w_1$.

\begin{claim}\label{th:bounded}
 There exists $r>0$ such that any component of $F^{-1}(\bD_{r}(w_1))$ 
 which intersects $I$ is bounded.
\end{claim}
\begin{proof}
 By Lemma \ref{th:imaginary}, there is $R>0$ such that 
 $\min_{s\in\bR,|s|\ge R}|F(is)|\ge|w_1|+1$.
 Increasing $R>0$ if necessary, we assume that $w_1\not\in F(\{|z|=R\})$, 
 so there is $r\in(0,1)$ such that $\bD_{r}(w_1)\cap F(\{|z|=R\})=\emptyset$.
 Then $F^{-1}(\bD_{r}(w_1))$ intersects neither $I\cap \{ |z|\geq R\}$ nor $\{|z|=R\}$,
 so
 any component of $F^{-1}(\bD_{r}(w_1))$ intersecting with $I$ is 
 contained in $\{|z|<R\}$.
\end{proof}
Fix $r>0$ with the property claimed above.
Fix $t_0 \in[0,1)$ such that
\begin{gather}
 F\circ \Gamma([t_0,1))\subset\bD_{r/4}(w_1).\label{eq:small}
\end{gather}
Let $\Omega$ be a component of $F^{-1}(\bD_{r}(w_1) )$ which contains $\Gamma (t_0)$.
Then $\Omega$ contains the whole $\Gamma ([t_0,1))$, so
$\Omega$ is unbounded. Hence by Claim \ref{th:bounded}, 
$\Omega$ does not intersect with $I$, so is contained in
either $\bH_-$ or $\bH_+$.

Assume contrary that the conclusion of the lemma does not hold.
Then $\Omega \subset \bH _+$.
Let $\phi$ be a germ of a single-valued analytic branch of $F^{-1}$ at 
$F( \Gamma (t_0))$ such that $\phi (F(\Gamma (t_0)))=\Gamma (t_0)$.
Then $\phi$ is holomorphic on the disc $\bD_{r/2}(F(\Gamma (t_0)))$, 
or else there exists a largest disk $\bD_{\rho}(F(\Gamma (t_0)))$ 
with $\rho\in(0,r/2)$ to which $\phi$ can be extended analytically. 

But the latter cannot occur: 
for there 
would then be a point $\xi\in \partial \bD_{\rho}(F(\Gamma (t_0)))$
over which $\phi$ cannot extend analytically.
Let $\alpha$ be the radial segment $[0,1]\ni s\mapsto 
F(\Gamma (t_0))+s(\xi-F(\Gamma (t_0)))\in\bD_{r/2}(F(\Gamma (t_0)))$ 
joining $F(\Gamma (t_0))$ and $\xi$. 
Since $\bD_{r/2}(F(\Gamma (t_0)))\subset\bD_{3r/4}(w_1)$, 
the curve $\phi\circ\alpha|[0,1)$ is contained in $\Omega$, so in $\bH_+$.
Since the unique critical point $-a$ of $F$ is in $\bH_-$,
Iversen's theorem 
yields that
the curve $\phi\circ\alpha|[0,1)$ tends to $\infty$.
On the other hand, $\phi\circ\alpha|[0,1)$ is real-analytic 
and the length of $F\circ(\phi\circ\alpha|[0,1))=\alpha|[0,1)$ is finite,
so by Claim \ref{th:infinite}, $\phi\circ\alpha|[0,1)$ cannot tend to $\infty$.
This is a contradiction.

Thus $\phi$ is holomorphic on $\bD_{r/2}(F(\Gamma (t_0)))$,
which contains $F\circ\Gamma([t_0,1))$ by \eqref{eq:small}.
Hence $\lim_{t\to 1}\Gamma (t)=\phi(\lim_{t\to 1}F\circ\Gamma(t))
=\phi (w_1)$, which contradicts that
$\Gamma$ tends to $\infty$.

Now the proof is complete.
\end{proof}

For a domain $D$ in $\bC$, a subset $c$ in $D$ is called
a crosscut (or a transverse arc) of $D$ if $c$ is homeomorphic to $(0,1)$,
the closure $\overline{c}$ in $\bC$ is homeomorphic to $[0,1]$ and 
$\overline{c}\cap\partial D$ consists of two points.

For each $r>0$, put $\bD_r:=\bD_r(0)=\{w\in\bC;|w|<r\}$. 

\begin{lemma}\label{lem:2}
For every $R>0$, $F^{-1}(\bD_R)\cap\bH_+$ has no unbounded components.
\end{lemma}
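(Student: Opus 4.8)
The plan is to argue by contradiction. Suppose $\Omega$ is an unbounded component of $F^{-1}(\bD_R)\cap\bH_+$. First I would use Lemma~\ref{th:imaginary} to control $\partial\Omega$ near $\infty$: pick $R_0>0$ with $|F(z)|>R$ whenever $z\in I$ and $|z|\ge R_0$. Every point of $\partial\Omega$ is a limit of points of $\Omega\subset F^{-1}(\bD_R)$, hence satisfies $|F|\le R$; so $\partial\Omega\cap I\subset\{|z|<R_0\}$, and any point $p$ of $\partial\Omega$ with $p\in\bH_+$ cannot lie in $F^{-1}(\bD_R)$ --- otherwise the component of $F^{-1}(\bD_R)\cap\bH_+$ through $p$ would be $\Omega$, although $p\notin\Omega$ --- so there $|F(p)|=R$. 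Thus $\partial\Omega\setminus\overline{\bD_{R_0}}\subset\{z:|F(z)|=R\}$: the boundary of $\Omega$ consists of arcs on which $|F|=R$ together with a bounded part inside $\bD_{R_0}$.

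The core of the argument should be to manufacture, from the unbounded $\Omega$, an asymptotic curve of $F$ with a \emph{finite} asymptotic value that lies eventually in $\bH_+$, which is forbidden by Lemma~\ref{lem:cont} (alternatively, a real-analytic curve in $\bH_+$ tending to $\infty$ whose image under $F$ has finite length, forbidden by Claim~\ref{th:infinite}). I would proceed as in the proof of Lemma~\ref{lem:cont}: fix $z_0\in\Omega$, put $w_0=F(z_0)\in\bD_R$, and let $\phi$ be the branch of $F^{-1}$ at $w_0$ with $\phi(w_0)=z_0$. If $\phi$ extends to a single-valued holomorphic function $\tilde\phi$ on all of $\bD_R$, then $\tilde\phi(\bD_R)$ is the component $\Omega'$ of $F^{-1}(\bD_R)$ containing $z_0$ and $F$ restricts to a biholomorphism of $\Omega'$ onto $\bD_R$; writing $V:=\tilde\phi^{-1}(\Omega)\subset\bD_R$, the map $\tilde\phi$ carries $V$ biholomorphically onto $\Omega$ and is thus unbounded on $V$. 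One then produces a real-analytic curve $\sigma:[0,1)\to V$ of finite Euclidean length with $\tilde\phi(\sigma(t))\to\infty$; since $\sigma$ has finite length it converges, and since $\tilde\phi$ is continuous on $\bD_R$ its limit must lie on $\partial\bD_R$. Then $\Gamma:=\tilde\phi\circ\sigma$ is a real-analytic curve in $\Omega\subset\bH_+$ tending to $\infty$ with $F\circ\Gamma=\sigma$, which converges in $\bC$; that is, $\Gamma$ is an asymptotic curve of $F$ with finite asymptotic value lying entirely in $\bH_+$ --- contradicting Lemma~\ref{lem:cont}. If $\phi$ does \emph{not} extend to all of $\bD_R$, I would instead apply Iversen's theorem along a curve on which $\phi$ continues up to, but not including, its endpoint: the lift either converges to the unique critical point $-a\in\bH_-$ or tends to $\infty$. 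If the lift never crosses $I$ it lies in $\bH_+$, so the first alternative is impossible and the second gives a real-analytic curve in $\bH_+$ tending to $\infty$ whose $F$-image is a segment, of finite length --- contradicting Claim~\ref{th:infinite}; the remaining possibility, that the lift escapes $\Omega$ through the bounded part of $\partial\Omega$ on $I$, I would reduce to the previous case by passing to the component $\Omega'$ of $F^{-1}(\bD_R)$ containing $z_0$, on which $F$ is either a biholomorphism or a finite branched covering of $\bD_R$.

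The step I expect to be the main obstacle is precisely where this departs from Lemma~\ref{lem:cont}: there the relevant region was a full component of the $F$-preimage of a disk and so lay entirely in $\bH_+$, whence lifted curves could not escape; here $\Omega$ is only a component of $F^{-1}(\bD_R)\cap\bH_+$, its boundary touches $I$ (in the bounded set $\bD_{R_0}$), and a priori a lifted curve can cross into $\bH_-$. Guaranteeing that the curve $\Gamma$ fed into Lemma~\ref{lem:cont} stays in $\bH_+$ --- and, in the first case, actually constructing the finite-length curve $\sigma$ in $V$ along which $\tilde\phi\to\infty$ --- is the technical crux; this is where the bound $\partial\Omega\cap I\subset\bD_{R_0}$ from Lemma~\ref{th:imaginary} must be exploited, together with the elementary fact that a holomorphic function which is unbounded on a plane domain admits a path along which it tends to $\infty$.
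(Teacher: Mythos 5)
Your reduction of the lemma to producing either an asymptotic curve of $F$ in $\bH_+$ with a finite asymptotic value (against Lemma~\ref{lem:cont}) or a real-analytic curve in $\bH_+$ tending to $\infty$ with finite-length $F$-image (against Claim~\ref{th:infinite}) aims at the right target, and your use of Lemma~\ref{th:imaginary} to confine $\partial\Omega\cap I$ to a bounded set is correct. But the step you yourself flag as the crux is a genuine gap, and the tools you invoke to close it do not work. First, the ``elementary fact that a holomorphic function which is unbounded on a plane domain admits a path along which it tends to $\infty$'' is false in the form you need: an unbounded, even simply connected, domain need not contain \emph{any} curve tending to $\infty$. (Take a base square together with disjoint thin corridors $C_1,C_2,\dots$, where $C_n$ leaves the base square, climbs to height $n$, travels out to a small square $Q_n$ near the point $4n$, and descends into it; the part of the domain outside a large disk is a union of bounded pieces contained in the various $C_n\cup Q_n$, so the tail of any curve escaping that disk is trapped in one bounded piece.) In your Case A, a curve $\sigma$ in $V$ with $\tilde\phi\circ\sigma\to\infty$ is exactly a curve in $\Omega=\tilde\phi(V)$ tending to $\infty$, and nothing you have established excludes this comb-like structure for $\Omega$; ruling it out is essentially the content of the lemma, so the existence of $\sigma$ (let alone real-analytic, of finite length, or with convergent image) cannot be assumed. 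Second, in Case B the escape hatch ``reduce to the previous case by passing to $\Omega'$, on which $F$ is either a biholomorphism or a finite branched covering of $\bD_R$'' is unjustified and in general false: when $\phi$ does not continue to all of $\bD_R$, the restriction of $F$ to $\Omega'$ need not be proper or surjective and can have infinite valence and asymptotic values, so there is no ``previous case'' to reduce to. The possibility that a lifted curve crosses $I$ into $\bH_-$ and returns is precisely what remains uncontrolled.

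For comparison, the paper closes exactly these two gaps by a combinatorial device. It triangulates $\overline{\bD_R}$ so that each open triangle $V$ avoids $F((\partial\Omega)\cap I)$; then every component $U$ of $F^{-1}(V)\cap\Omega$ is a full component of $F^{-1}(V)$, so lifts of curves in $\overline{V}$ cannot escape through $I$, and Iversen's theorem together with Lemma~\ref{lem:cont} forces $F:\overline{U}\to\overline{V}$ to be a homeomorphism; in particular each such $U$ is bounded. Since $\Omega$ could a priori still be an infinite union of such bounded cells, the paper then orders the triangles into an increasing chain $D_1\subset\cdots\subset D_N=\overline{\bD_R}$ with each $\Int D_j$ simply connected and runs an induction, using crosscuts over $(\partial D_j)\cap\Int D_{j+1}$ and the boundary homeomorphism statement to show that adjoining one triangle cannot create an unbounded component. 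Some gluing argument of this kind (or an appeal to an external result such as Huber's theorem, as the authors remark) is unavoidable, and it is absent from your proposal.
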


\begin{proof}
Let $\Omega$ be a component of $F^{-1}(\bD_R)\cap\bH_+$. From Lemma \ref{th:imaginary}, 
 $(\partial\Omega)\cap I$ has at most finitely many components, 
 which are closed intervals. 
The image of each component of $(\partial\Omega)\cap I$ under $F$
 is a 
real-analytic curve in 
$\overline{\bD_R}$, and
 $\bD_R\setminus F((\partial\Omega)\cap I)$ has at most finitely many components.
Fix a triangulation of $\overline{\bD_R}$ 
 such that the interior of any triangle is contained in $\bD_R\setminus F((\partial\Omega)\cap I)$.

As convention, we call the interior of each triangle an {\itshape open triangle}.

\setcounter{claim}{0}
\begin{claim}
 For every open triangle $V$ and 
 every component $U$ of $F^{-1}(V)\cap\Omega$, 
 $U$ is bounded and
 the restriction $F_{\overline{U}}$ of $F$ on $\overline{U}$ is a homeomorphism 
 from $\overline{U}$ onto $\overline{V}$.
\end{claim}

\begin{proof}
 Fix $z_0\in U$. By $F'(z_0)\neq 0$, there is a germ $\phi$ of a single-valued branch of 
 $F^{-1}$ with $\phi(F(z_0))=z_0$.
 Assume that there is a curve $\gamma :[0,1]\to \overline{V}$ with $\gamma (0)=F(z_0)$ 
 such that the analytic continuation of $\phi$ along $\gamma |[0,t]$
 is possible for any $t\in [0,1)$, but impossible for $t=1$.

Since the unique critical point $-a$ of $F$ is in $\bH_-$,
by Iversen's theorem, the curve $\phi\circ\gamma$ is an asymptotic curve of $F$
with the finite asymptotic value $\gamma(1)\in \mathbb C$. 
Then by Lemma \ref{lem:cont},
 there is $t_0\in[0,1)$ such that
 $\phi\circ\gamma(t_0)\in\bH_-$. 
On the other hand, from $F(I)\cap V=\emptyset$, 
$U$ is a component of $F^{-1}(V)$.
Moreover, $\overline{U}$ is a component of $F^{-1}(\overline{V})$
since there is no critical point of $F$ on $I$.
Thus the curve $\phi\circ\gamma$ is in $\overline{U}$, so in $\bH_+\cup I$.
This contradicts that $\phi\circ\gamma(t_0)\in\bH_-$.
 
We have shown that $\phi$ extends analytically along all curves in $\overline{V}$.
 Now by the monodromy theorem, 
 a single-valued continuous branch $F^{-1}:\overline{V}\to \overline{U}$ exists. 
 Hence $U$ is bounded and $F_{\overline{U}}:\overline{U}\to\overline{V}$ is homeomorphic.
\end{proof}

Let $N$ be the number of triangles in $\overline{\bD_R}$.

\begin{claim}\label{th:increasing}
 There is an increasing sequence of closed sets
\begin{gather*}
 D_1\subset D_2\subset \cdots \subset D_N=\overline{\bD_R}
\end{gather*}
 such that for each $j\in\{1,\ldots,N\}$,
 $D_j$ consists of $j$ triangles and
 $\Int D_j$ is connected and simply connected. 
\end{claim}

\begin{proof}
 This is clear if $N=1$, so we assume that $N\ge 2$.
 The construction is decreasingly inductive.
 For $j=N$, $D_N=\overline{\bD_R}$ 
 consists of $N$ triangles and $\Int D_N=\bD_R$ is connected and simply connected. 
 Fix $j\in\{1,\ldots,N-1\}$,
 and suppose that we obtain a closed set $D_{j+1}$ consisting of $j+1$ triangles 
 such that $\Int D_{j+1}$ is connected and simply connected.

 Let $\mathcal{S}_j$ 
 be the set of all triangles $\Delta$ in $D_{j+1}$
 having an edge in $\partial D_{j+1}$
 such that $\Int(D_{j+1}\setminus\Delta)$ is not connected.
 Let us find a triangle $\Delta_j$ in $D_{j+1}$ 
 which has an edge in $\partial D_{j+1}$ and does not belong to $\mathcal{S}_j$. 
 We can certainly do this when $\mathcal{S}_j=\emptyset$.
 Suppose that $\mathcal{S}_j\neq\emptyset$. For each $\Delta\in\mathcal{S}_j$,
 there are two components $P$ and $P'$ of 
 $\Int( D_{j+1}\setminus\Delta)$
 and put $N(\Delta)$ be the minimum of the number of triangles in $\overline{P}$ 
 and that of $\overline{P'}$. Fix a triangle $\Delta\in\mathcal{S}_j$
 satisfying
\begin{gather}
 N(\Delta)=\min_{\Delta'\in\mathcal{S}_j}N(\Delta'),\label{eq:minimal}
\end{gather} 
and a component $P$ of $\Int(D_{j+1}\setminus\Delta)$
 such that $\overline P$ consists of $N(\Delta)$ triangles. 
 Then any triangle $\Delta_j$ in $\overline{P}$ having an edge in 
 $(\partial D_{j+1})\cap \overline{P}$
 will not belong to $\mathcal{S}_j$: for, if $\Delta_j\in\mathcal{S}_j$, then 
 there is a component of 
 $\Int(D_{j+1}\setminus\Delta_j)$,
 which is a subset of $\Int(P\setminus\Delta_j)$,
 so $N(\Delta_j)<N(\Delta)$. This contradicts \eqref{eq:minimal}.

 With such $\Delta_j$, set $D_j:=\overline{D_{j+1}\setminus\Delta_j}$. Then $\Int D_j$ is 
 connected, and moreover $(\partial{\Delta_j})\cap(\Int D_{j+1})$
 is a crosscut of $\Int D_{j+1}$, so $\Int D_j$ is simply connected.
\end{proof}

Let $(D_j)$ be the increasing sequence of closed sets 
obtained in Claim $\ref{th:increasing}$.
We show by induction that
for each $j\in\{1,\ldots,N\}$,
$F^{-1}(\Int D_j)\cap\bH_+$ has no unbounded components.

 For $j=1$, $D_1$ 
 is a single triangle. This case is covered by Claim 1.

 Fix $j\in\{1,\ldots,N-1\}$, and suppose the assertion holds for $D_j$.
 Put $C:=(\partial D_j)\cap\Int D_{j+1}$, which is a crosscut of $\Int D_{j+1}$.
 Assume that a component $\Omega_0$ of $F^{-1}(\Int D_{j+1})\cap\bH_+$ is unbounded.
 Let $c_1$ be a component of $F^{-1}(C)\cap \Omega _0$.
 Since $F$ has no critical point in $\Omega _0$, $c_1$ is a 
 crosscut of $\Omega _0$, and
 $\Omega_0\setminus c_1$, which is possibly still connected,
 has an unbounded component $\Omega _1$.

 Let $U_1$ be a component of $\Omega _1\setminus F^{-1}(C)$ such that $c_1\subset\partial U_1$.
 Then $U_1$ is a component of 
 either $F^{-1}(\Int D_{j})\cap\Omega$ or $F^{-1}(\Int \Delta_{j})\cap\Omega$.
 In either case, by the assumption for $j$ and the assertion for $j=1$, 
 $U_1$ is bounded. Hence $\Omega_1\setminus\overline{U_1}$ has
 an unbounded component $\Omega_2$. Let $c_2$ be a component of 
 $\partial \Omega _2\cap\Omega_1$.
 Then $c_2$ is a crosscut of $\Omega_1$ and $c_2\subset\partial U_1$.

 Let $U_2$ be a component of $\Omega_2\setminus F^{-1}(C)$ such that $c_2\subset\partial U_2$.
 Then $U_2$ is bounded by the same reason.
 Let $c_3$ be a component of $\partial U_2\cap\Omega _2$,
 which is a crosscut of $\Omega_2$.

 Now $c_2\subset\partial U_1\cap\partial U_2$.
 Hence at least one of $U_1$ and $U_2$, say $U_*$, is a component of $F^{-1}(\Int\Delta_j)\cap\Omega$.
 By Claim 1, 
 $F$ restricts to a homeomorphism from $\partial(U_*)$ to $\partial\Delta_j$.
 But $F^{-1}(C)\cap \partial(U_*)$ contains not only $c_2$
 but also either $c_1$ or $c_3$, which is a contradiction.

 Hence $F^{-1}(\Int D_{j+1})\cap\bH_+$ also has no unbounded components.
 This completes the induction.

This applies in particular to $\Int D_N=\bD_R$.
Hence $F^{-1}(\bD_R)\cap\bH_+$ has no unbounded components, which completes the
proof.
\end{proof}

\begin{lemma}\label{th:left}
For every $z\in\bH_-\cup I$,
$|F(z)|\le\max\{|z|,|a|\}^{n+1}\cdot 2^{n}\max_{|\zeta|\le e^a}|\zeta f(\zeta ) |$.
\end{lemma}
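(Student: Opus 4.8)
The plan is to estimate $F(z)$ directly, integrating along the straight segment from $a$ to $a+z$. Since the integrand $w^n f(e^w)e^w$ is entire in $w$, the integral defining $F$ is path-independent, so for $z\in\bH_-\cup I$ we may write
\begin{gather*}
 F(z)=\int_0^1(a+sz)^n f(e^{a+sz})\,e^{a+sz}\,z\,ds.
\end{gather*}
The first step is to observe that $z\in\bH_-\cup I$ means $\Re z\le 0$, hence $\Re(a+sz)=a+s\Re z\le a$ for every $s\in[0,1]$, so that $|e^{a+sz}|\le e^a$ along the entire segment. Consequently $\big|f(e^{a+sz})\,e^{a+sz}\big|\le\max_{|\zeta|\le e^a}|\zeta f(\zeta)|$ for each $s\in[0,1]$.

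The second step is to bound the polynomial factor. Since $a+sz$ lies on the segment joining $a$ and $a+z$, we have $|a+sz|\le\max\{|a|,|a+z|\}\le\max\{|a|,|a|+|z|\}\le 2\max\{|a|,|z|\}$, and therefore $|a+sz|^n\le 2^n\max\{|a|,|z|\}^n$. Combining this with $|z|\le\max\{|a|,|z|\}$ and integrating over $s\in[0,1]$ gives
\begin{gather*}
 |F(z)|\le 2^n\max\{|a|,|z|\}^n\cdot\max_{|\zeta|\le e^a}|\zeta f(\zeta)|\cdot\max\{|a|,|z|\}
 =\max\{|a|,|z|\}^{n+1}\cdot 2^n\max_{|\zeta|\le e^a}|\zeta f(\zeta)|,
\end{gather*}
which is the asserted inequality.

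There is no real obstacle here; the only points worth highlighting are the choice of the straight path, which is exactly what confines $\pi\circ(\text{path})$ to $\{|\zeta|\le e^a\}$ by virtue of $\Re z\le 0$, and the routine use of the triangle inequality to absorb $|a|$ into the factor $2^n$ so that the bound is stated in terms of $\max\{|z|,|a|\}$ alone.
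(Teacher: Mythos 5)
Your proof is correct and follows essentially the same route as the paper: integrate along the straight segment from $a$ to $a+z$, use $\Re(a+sz)\le a$ to bound $|f(e^w)e^w|$ by $\max_{|\zeta|\le e^a}|\zeta f(\zeta)|$, and absorb the polynomial factor into $2^n\max\{|z|,|a|\}^{n+1}$. The paper states the same estimate in one line (with the intermediate bound $(|z|+|a|)^n|z|$); you have merely written out the details it leaves implicit.
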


\begin{proof}
For each $z\in\bH_-\cup I$,
\begin{gather*}
 |F(z)|=\left|\int_{[a,a+z]}z^nf(e^z)e^z d z\right|
\le (|z|+|a|)^n|z|\max_{|\zeta|\le e^a}|\zeta f(\zeta ) |,
\end{gather*}
where $[a,a+z]$ is the closed segment joining $a$ and $a+z$.
\end{proof}

\begin{lemma}\label{lem:4}
For each $r>0$, put 
\begin{gather*}
\mu_+(r):=\min\{|F(z)|;z\in\bH_+\cup I, |z|=r\}. 
\end{gather*}
If $f$ is transcendental, then
$\liminf_{r\to\infty}
\mu_+(r)\le 1$.
\end{lemma}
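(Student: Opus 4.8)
The plan is to prove the contrapositive in disguise: assuming $f$ is transcendental but that $\liminf_{r\to\infty}\mu_+(r)>1$, I will deduce that $f$ is rational, hence meromorphic at $\infty$, a contradiction. Fix $c>1$ and $R_0>0$ with $|F(z)|\ge c$ for every $z\in\bH_+\cup I$ with $|z|\ge R_0$. Since $\{\Re z\ge R_0\}\subset\{z\in\bH_+\cup I:|z|\ge R_0\}$, the function $F$ is zero-free on the half-plane $\Pi:=\{\Re z>R_0\}$, so a single-valued branch $\log F$ exists on the simply connected set $\Pi$, and $u:=\log|F|=\Re\log F$ is a \emph{positive} harmonic function on $\Pi$ (indeed $u>\log c>0$).

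The first and main step is to bound the growth of $F$ on $\Pi$. I would use the Herglotz representation of a positive harmonic function on a half-plane, $u(z)=\beta(\Re z-R_0)+\tfrac1\pi\int_{\bR}\tfrac{(\Re z-R_0)}{(\Re z-R_0)^2+(\Im z-t)^2}\,d\nu(t)$ with $\beta\ge 0$ and $\int_\bR(1+t^2)^{-1}\,d\nu(t)<\infty$. The key is that the boundary values are only logarithmically large: $u(R_0+it)=\log|F(R_0+it)|=O(\log(2+|t|))$, because $F(R_0+it)=F(it)+\int_0^{R_0}F'(s+it)\,ds$, $|F(it)|$ is polynomially bounded by Lemma \ref{th:left}, and $|F'(s+it)|=|a+s+it|^n\,|e^{a+s+it}f(e^{a+s+it})|=O(|t|^n)$ on the strip $0\le s\le R_0$. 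Feeding this bound on $d\nu$ into the Poisson term shows the latter is $O(\log|z|)$, hence $u(z)=O(|z|)$ on $\Pi$. Combining this with Lemma \ref{th:left} on $\bH_-\cup I$ and the same strip estimate on $0\le\Re z\le R_0$, I would conclude $\log|F(z)|=O(|z|)$ on all of $\bC$; that is, $F$ is an entire function of exponential type.

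It then remains to transfer this to $f$. Write $\Phi(\zeta):=e^a\zeta f(e^a\zeta)$, an entire function with $F'(z)=(a+z)^n\Phi(e^z)$. From $|F(z)|\le Ke^{A|z|}$ and Cauchy's estimate, $|\Phi(e^z)|\le Ke^{A(|z|+1)}/|a+z|^n$, so $\max_{|z|=R}|\Phi(e^z)|\le K'e^{AR}$. Every $u\ne 0$ has a branch of $\log u$ of modulus $\le\sqrt{(\log|u|)^2+\pi^2}$, so the disk $\{|u|\le e^{\sqrt{R^2-\pi^2}}\}$ lies inside $\{e^z:|z|\le R\}$; the maximum modulus principle applied to $\Phi\circ\exp$ then gives $\max_{|u|=\rho}|\Phi(u)|=O(\rho^A)$ as $\rho\to\infty$, so $\Phi$ is a polynomial. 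Hence $f(\zeta)=\zeta^{-1}\Phi(e^{-a}\zeta)$ is rational, in particular meromorphic at $\infty$, contradicting transcendence.

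The hard part is the exponential-type bound: extracting a growth estimate for $F$ from nothing more than the lower bound $|F|\ge c>1$ on a half-plane. The mechanism is that a positive harmonic function on a half-plane grows at most linearly along rays (this is where harmonic measure enters), while the logarithmic boundary control $|F(R_0+it)|=O(|t|^{n+1})$ prevents the Poisson part of the representation from spoiling this. Two routine points also need care: the degenerate case $\liminf_{r\to\infty}\mu_+(r)\le 0$ (then $F$ vanishes on arbitrarily large semicircles in $\bH_+\cup I$ and there is nothing to prove), and checking that $u$ extends harmonically across $\partial\Pi$ --- which holds because $|F|\ge c$ on a neighbourhood of $\{\Re z\ge R_0\}$ --- so that the Herglotz measure is indeed given by the boundary density with no singular part.
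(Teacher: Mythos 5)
Your argument is correct, but it follows a genuinely different route from the paper's. Both proofs begin with the same observation: under the assumption $\liminf_{r\to\infty}\mu_+(r)>1$, the function $\log|F|$ is positive and harmonic on (essentially) the right half-plane, while Lemma \ref{th:left} and the strip estimate keep its boundary values near the imaginary axis down to $O(\log|z|)$. They then exploit this with different machinery. The paper applies Harnack's inequality on a chain of half-annuli $\{r/4<|z|<2r\}\cap\bH_+$ to get $\log|F(r)|\le C_1r^{\alpha}$ on the positive real axis, with an uncontrolled exponent $\alpha=\log_2C_0$, and then a harmonic-measure/Poisson-kernel estimate in the half-annulus to propagate this to all of $\bH_+$; the conclusion is only that $F$ has \emph{finite order}, and the contradiction is extracted via Hadamard's theorem (writing $f=\zeta^ke^{H}$ with $H$ non-constant forces $f(e^{z+a})$ to have infinite order). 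You instead invoke the Herglotz--Riesz representation of a positive harmonic function on a half-plane; since the boundary density is $O(\log(2+|t|))$, and --- as you correctly flag --- there is no singular part because $\log|F|$ extends harmonically across the boundary line once $R_0$ is enlarged slightly so that $\{\Re z>R_0-\tfrac12\}\subset\{z\in\bH_+:|z|\ge R_0'\}$, the Poisson term contributes only $O(\log|z|)$ and the linear term $\beta\,\Re z$ dominates. This gives the strictly sharper conclusion that $F$ is of \emph{exponential type}, and that stronger bound is what lets you finish more elementarily: pulling it back through $F'(z)=(a+z)^n\Phi(e^z)$ and the inclusion of the circle $\{|u|=e^{\sqrt{R^2-\pi^2}}\}$ in $\exp(\{|z|\le R\})$ yields polynomial growth of $\Phi$, hence $f$ is a polynomial, with no appeal to Hadamard factorization. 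Two cosmetic remarks: the full disk $\{|u|\le e^{\sqrt{R^2-\pi^2}}\}$ is \emph{not} contained in $\exp(\{|z|\le R\})$ (the origin and a punctured neighbourhood of it are missed), but you only use the bounding circle, so the maximum-modulus step is unaffected; and the ``degenerate case $\liminf\mu_+\le 0$'' needs no separate treatment, since any value of the $\liminf$ at most $1$ is precisely the desired conclusion. In short: the same harmonic-function philosophy, but a different representation theorem at the core and a quantitatively stronger intermediate estimate that simplifies the endgame.
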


\begin{proof}
For every $r>0$, put
\begin{gather*}
 D_r:=\{r/4<|z|<2r\} \cap \bH_+.
\end{gather*}
Then $D_1\cong D_r$ under the similarity $z\mapsto rz$. 
Let $\varphi:\bD_1\to D_1$ be a (inverse of) Riemann mapping such that
$\varphi(0)=1\in D_1$. 
For every $r>0$, the conformal map 
\begin{gather*}
 \varphi_r:=r\cdot\varphi:\bD_1\to D_r 
\end{gather*}
satisfies that $\varphi_r(0)=r\in D_r$
and extends to a homeomorphism from $\overline{\bD_1}$ onto $\overline{D_r}$.
The Poisson kernel on $\bD_1$ is
\begin{gather*}
 P(w,\xi):=\Re\left(\frac{\xi+w}{\xi-w}\right)=\frac{1-|w|^2}{|\xi-w|^2}
\end{gather*} 
for $w\in\bD_1$ and $\xi\in\partial{\bD_1}$. 
For each $w\in\bD_1$, $P(w,\xi)|d\xi|/(2\pi)$ is a probability measure on $\partial\bD_1$,
and more specifically, the harmonic measure for $\bD_1$ with pole at $w$
(for the details, see, e.g., \cite[\S1.2]{R}).

Assume that 
\begin{gather*}
 \liminf_{r\to\infty}\mu_+(r)>1. 
\end{gather*}
Then there is $r_0>0$ such that
$\log|F|$ is positive and harmonic on $\{|z|>r_0\}\cap(\bH_+\cup I)$. 

Let us compare $\log|F(r)|$ and $\log|F(r/2)|$ for each $r>4r_0$.
Since $\log|F\circ \varphi _r|$ is positive and harmonic on $\bD_1$,
Harnack's inequality (cf. \cite[Theorem 1.3.1]{R})
yields
\begin{gather*}
 \frac{1-|\varphi^{-1}(1/2)|}{1+|\varphi ^{-1}(1/2)|}\log |F(r)|\leq \log |F(r/2)|
\end{gather*}
(we note that $\log|F(r)|=\log|F\circ \varphi _r(0)|$ and
that $\log |F(r/2)|=\log|F\circ \varphi _r(\varphi^{-1}(1/2))|$). 
Hence we have
\begin{gather*}
 \log |F(r)|\le C_0\log |F(r/2)|,
\end{gather*}
where we put $C_0:=\frac{1+|\varphi ^{-1}(1/2)|}{1-|\varphi^{-1}(1/2)|}>1$.

A repeated use of this estimate implies that
 \begin{gather}
\log |F(r)|\le C_0^{\max\{j\in\bN;r/2^j>2r_0\}}\cdot\max_{s\in[2r_0,4r_0]}\log|F(s)|
\le C_1r^{\alpha},\label{eq:real} 
 \end{gather} 
where we put $\alpha:=\log_2 C_0>0$ and 
$C_1:=(2r_0)^{-\alpha}\max_{s\in[2r_0,4r_0]}\log|F(s)|>0$.

Let us next compare $\log|F(z)|$ and $\log|F(|z|)|$ for each $z\in\bH_+$ with $|z|>4r_0$.
Fix $z\in\bH_+$ with $|z|>4r_0$ and put $r=|z|$. Then $z\in D_r$.
Let us decompose $\partial D_r$ into the disjoint subsets
$I_r:=(\partial D_r)\cap I$ and 
$S_r:=(\partial D_r)\setminus I$. 
Then $\varphi_r^{-1}(I_r)=\varphi ^{-1}(I_1)$ and $\varphi_r^{-1}(S_r)=\varphi^{-1}(S_1)$, and
\begin{multline}
 \log |F(z)|=\int_{\varphi ^{-1}(I_1)}\left( \log|F(\varphi_r(\xi))|\right) 
P(\varphi_r^{-1}(z),\xi)\frac{|d\xi|}{2\pi} \\
 +\int_{\varphi ^{-1}(S_1)}\left(\log|F(\varphi_r(\xi))|\right) P(\varphi_r^{-1}(z),\xi)\frac{|d\xi|}{2\pi}.
\label{eq:harmonicmeasure}
\end{multline}
Increasing $r_0>0$ if necessary, Lemma \ref{th:left} implies 
that
$$\log|F(it)|\le 2\log(|t|^{n+1})=2(n+1)\log|t|$$
for every 
$t\in \bR$ with $|t|>r_0$. Since $I_r\subset\{it\in\bR;|t|<2r\}$,
\begin{gather}
 \int_{\varphi ^{-1}(I_1)}\left(\log|F(\varphi_r(\xi))|\right) P(\varphi_r^{-1}(z),\xi)\frac{|d\xi|}{2\pi}\le 2(n+1)\log(2r).\label{eq:imaginary}
\end{gather}
Put $c:=\varphi_r^{-1}(\{z\in D_r;|z|=r\})$,
which is a crosscut of $\bD_1$. Note that
\begin{gather*}
 \{\eta\xi^{-1}\in\overline{\bD_1};\eta\in\overline{c},
 \xi\in\varphi^{-1}(\overline{S_1})\} 
\end{gather*}
is compact in $\overline{\bD_1}$ and does not contain $1$. Put
\begin{gather*}
 C_2:=\max\{P(\eta\xi^{-1},1);\eta\in\overline{c},
\xi\in\varphi^{-1}(\overline{S_1})\}< \infty.
\end{gather*}
We note that $P(\eta,\xi)=P(\eta\xi^{-1},1)$ for every $\xi\in\partial\bD_1$.
Since $\varphi_r ^{-1}(z)\in c$ and $\log|F|\ge 0$ on $\partial D_r$, 
we have
\begin{gather*}
\int_{\varphi ^{-1}(S_1)}\left( \log |F(\varphi_r(\xi))| \right) 
P(\varphi_r^{-1}(z),\xi)\frac{|d\xi|}{2\pi}\le
C_2\int_{\partial \bD_1}\log|F(\varphi_r(\xi))|\frac{|d\xi|}{2\pi}
=C_2\log |F(|z|)|,
\end{gather*}
where the final equality follows from the mean value property of harmonic functions 
(we note that $\log|F(\varphi_r(0))|=\log|F(r)|$ and 
that $r=|z|$).
This with \eqref{eq:harmonicmeasure} and \eqref{eq:imaginary} concludes
\begin{gather*}
 \log |F(z)|\le 2(n+1)\log|2z|+C_2\log|F(|z|)|.
\end{gather*}

From this estimate with \eqref{eq:real}, on $\bH_+$, 
\begin{gather*}
 \log^+|F(z)|=O(\log|z|)+O(|z|^{\alpha})
\end{gather*}
as $|z|\to \infty$. This with Lemma \ref{th:left} implies that the order of $F$ is finite.
Thus by the definition of $F$, the order of $f(e^{z+a})$ is also finite.

On the other hand, 
we can show that the order of $f(e^{z+a})$ is infinite, which will prove our lemma by contradiction.
Since $f(\bC\setminus\{0\})\subset\bC\setminus\{0\}$, 
we can write as $f(\zeta)=\zeta^ke^{H(\zeta)}$ with
some $k\in\bN\cup\{0\}$ and some entire function $H(\zeta)$.
By the assumption that $f$ is transcendental, $H$ is non-constant.
Hence by Hadamard's theorem (cf. \cite[p.\ 209]{A}), the order of $f$ 
is greater than or equal to one.
Hence the order of $f(e^{z+a})$ is infinite.
This is a contradiction.

Thus we have proved  $\liminf_{r\to\infty}\mu_+(r)>1$.
\end{proof}

Let us complete the proof of Theorem.

Assume that $f$ is transcendental.
Fix $R_1>\max\{1,|F(-a)|\}$. Then by Lemma \ref{lem:4}, $F^{-1}(\bD_{R_1})\cap \bH_+$ is unbounded,
and then by Lemma \ref{lem:2}, there are infinitely many (bounded) components of 
$F^{-1}(\bD_{R_1})\cap\bH_+$. By Lemma \ref{th:imaginary}, 
the boundaries of at most finitely many components of $F^{-1}(\bD_{R_1})\cap \bH_+$
intersect $I$, so the other (infinitely many) components of $F^{-1}(\bD_{R_1})\cap\bH_+$ 
are all relatively compact in $\bH_+$.
Let $V$ and $W$ be distinct such components. 
Then $\overline{V}\cap\overline{W}=\emptyset$ since $F$ has no critical point in $\bH_+$.
Join $\overline{V}$ and $\overline{W}$
by a compact line segment $l$, take $R'_1>\max_{z\in l}|F(z)|(\ge R_1)$
and let $\Omega_1$ be the component of $F^{-1}(\bD_{R'_1})$ such that $l\subset\Omega_1$.
Then $\overline{V}\cup\overline{W}\subset\Omega_1\cap F^{-1}(\overline{\bD_{R_1}})$. Put 
$A_1:=\{R_1<|w|<R'_1\}$ and
\begin{gather*}
 \Omega_1':=\Omega_1\setminus F^{-1}(\overline{\bD_{R_1}}).
\end{gather*}
Then $\Omega_1'$ is a component of $F^{-1}(A_1)$ and is at least triply-connected. 
The restriction 
\begin{gather*}
 F_{\Omega_1'}:\Omega_1'\to A_1 
\end{gather*}
is locally homeomorphic, i.e,
has no critical point since $F(-a)\not\in A_1$. If 
$F$ has also no asymptotic curve in $\Omega_1'$ with a finite asymptotic value
in $A_1$,
then Iversen's theorem concludes that $F_{\Omega_1'}$ have the curve lifting property, 
that is, any closed curve may be lifted uniquely under $F_{\Omega'}$ 
given any preimage of the initial point
(for the details, see, e.g.,\ \cite[Definition 4.13]{Forster}),
and then
by \cite[Theorem 4.19]{Forster}, the local homeomorphism $F_{\Omega_1'}$ must be a covering.
Since the universal covering of $A_1$ is topologically a disk and 
$\pi_1(A_1)$ is $\bZ$, $\pi_1(\Omega_1')$ must be either $\bZ$ or $\{1\}$, so
$\Omega_1'$ must be topologically either an annulus or a disk. 
This contradicts that $\Omega_1'$ is at least triply connected.

Hence $F$ has an asymptotic curve ${\Gamma_1}\subset\Omega_1'$
with a finite asymptotic value $a_1$ in $A_1=\{R_1<|w|<R'_1\}$.
By Lemma \ref{lem:cont}, we may assume that ${\Gamma_1}\subset\Omega_1'\cap\bH_-$.

Fix $R_2>R'_1$. By the same argument applied to $R_2$, we obtain $R_2'>R_2$,
a component $\Omega_2'$ of $F^{-1}(\{R_2<|w|<R_2'\})$ and
an asymptotic curve ${\Gamma_2}\subset \Omega _2'\cap\bH_-$ of $F$
with a finite asymptotic value $a_2\in \{ R_2<|w|<R_2'\}$.
We note that ${\Gamma_1\cap\Gamma_2}=\emptyset$. Without loss of generality,
we assume that both {$\Gamma_1$ and $\Gamma_2$} are simple.
Let $U$ be an unbounded domain contained in $\bH_-$ such that 
\begin{gather*}
 \partial U={\Gamma_1\cup\Gamma_2}\cup c, 
\end{gather*}
where $c$ is an arc joining the endpoint of {$\Gamma_1$} and that of {$\Gamma_2$} in $\bH_-$.
Since $a_1\neq a_2$, by Lindel\"of's theorem (cf.\ \cite[p.\ 65]{N}), 
\begin{gather*}
 \sup_U|F|=\infty.
\end{gather*}

On the other hand, 
we can show that $|F|$ is bounded on U, which will prove the Theorem by contradiction.
First, we note that
\begin{gather*}
 M:=\max_{z\in\partial U}\log |F(z)|<\infty.
\end{gather*}

For a bounded domain $D\subset \bC$,
let $(z,E)\mapsto\omega_D(z,E)$ be the harmonic measure of $D$,
where $z\in D$ and $E\subset \partial D$ is a Borel subset.
For the details, see, e.g., \cite[\S4.3]{R}.

Fix $z_0\in U$. 
For every $r>|z_0|$, let $U_r$ be the component of $U\cap \bD_r$ which contains $z_0$.
Then by the two constant theorem (cf.\ \cite[p.\ 101]{R}), 
\begin{gather*}
\log |F(z_0)|\le 
M\omega_{U_r}(z_0,\partial U_r\setminus\partial\bD_r )
+\left(\sup_{z\in \bD_r\cap\bH_-}\log |F(z)|\right)\omega_{U_r}(z_0,\partial U_r\cap \partial \bD_r)\\
\leq M\cdot 1+\left( \sup_{z\in \bD_r\cap\bH_-}\log |F(z)|\right)\omega_{U_r}(z_0,\partial U_r\cap \partial \bD_r),
\end{gather*}
so we have
\begin{gather}
\log |F(z_0)|\le M+\limsup_{r\to \infty}
 \left(  \left( \sup_{z\in \bD_r\cap\bH_-}\log |F(z)| \right) \omega_{U_r}(z_0,\partial U_r\cap \partial \bD_r)\right). \label{eq:Lindelof}
\end{gather}

By the monotonicity of harmonic measures (cf.\ \cite[Corollary 4.3.9]{R}),
\begin{gather*}
 \omega_{U_r}(z_0,\partial U_r\cap \partial \bD_r)
 \le\omega_{\bD_r\cap\bH_-}(z_0,\partial U_r\cap\partial\bD_r) 
 \le\omega_{\bD_r\cap\bH_-}(z_0,\partial \bD_r\cap\bH_-)
 =\frac{2}{\pi}\arg\frac{ir+z_0}{ir-z_0}
\end{gather*}
(for the final equality, cf.\ \cite[p.\ 100]{R}).
Hence as $r\to \infty$, 
\begin{gather*}
 \omega_{U_r}(z_0,\partial U_r\cap \partial \bD_r)=O(r^{-1}).
\end{gather*}
This with Lemma \ref{th:left} implies that
\begin{gather*}
 \limsup_{r\to \infty}
\left( \left( \sup_{z\in \bD_r\cap\bH_-}\log |F(z)| \right)\omega_{U_r}(z_0,\partial U_r\cap \partial \bD_r)\right) \leq 0.
\end{gather*}
Hence by \eqref{eq:Lindelof}, we conclude $\sup_U\log |F|\le M$ since $z_0\in U$ is arbitrary.
Now the proof is complete.
\end{proof}

\begin{acknowledgement}
 Katsutoshi Yamanoi thanks Professors Masaaki Umehara and Kotaro Yamada for turning his attention to \cite[Question C]{UY}. The authors thank Professors Masaaki Umehara, Kotaro Yamada
 and David Drasin for invaluable comments. 
 Y\^usuke Okuyama was partially supported by JSPS Grant-in-Aid for Young Scientists (B), 21740096.
Katsutoshi Yamanoi
 was partially supported by JSPS Grant-in-Aid for Young Scientists (B), 20740077
 and by the foundation of Challenging Research Award, Tokyo Institute of Technology.
\end{acknowledgement}


\end{document}